\errorcontextlines=5
\documentclass[reqno,twoside,final]{amsart}
\usepackage{amsmath,amstext,amsthm,amsfonts,amssymb,amscd}
\usepackage[numbers]{natbib}



\theoremstyle{definition}

\newtheorem{rem}{Remark}
\newtheorem*{rem*}{Remark}
\newtheorem*{acknow*}{Acknowledgements}
\newtheorem*{examples*}{Examples}

\theoremstyle{plain}

\newtheorem{lemma}{Lemma}

\newtheorem{theorem}{Theorem}
\newtheorem*{theorem*}{Theorem}
\newenvironment{proof-sketch}{\noindent{\bf Sketch of Proof}\hspace*{1em}}{\qed\bigskip}
\newenvironment{proof-idea}{\noindent{\bf Proof Idea}\hspace*{1em}}{\qed\bigskip}
\newenvironment{proof-of-lemma}[1]{\noindent{\bf Proof of Lemma #1}\hspace*{1em}}{\qed\bigskip}
\newenvironment{proof-of-prop}[1]{\noindent{\bf Proof of Proposition #1}\hspace*{1em}}{\qed\bigskip}
\newenvironment{proof-of-thm}[1]{\noindent{\bf Proof of Theorem #1.}\hspace*{1em}}{\qed\bigskip}
\newenvironment{proof-attempt}{\noindent{\bf Proof Attempt}\hspace*{1em}}{\qed\bigskip}

\pagestyle{myheadings} \markboth{Alexey Kanel-Belov, Sergey Malev,
Louis Rowen}{The images of noncommutative polynomials evaluated on
$2\times 2$ matrices}

\begin{document}

\title{Algebro-geometric proof of Christoph's Theorem}
\author{Sergey Malev and and Anastasiia Zhilina}

\address{Department of mathematics, Ariel University,
Ariel, Israel} \email{sergeyma@ariel.ac.il}
\address{Higher School of Economics, Moscow, Russian Federation}
\email {tasya020822@mail.ru}
\thanks{This research was supported by the Russian Science Foundation grant No. 22-11-00177, https://rscf.ru/project/22-11-00177}



\keywords{Algebraic-geometry; algebraic series; Christoph's theorem; Riemann-Roch theorem} 

\maketitle

\subsection*{Abstract}
In this paper an algebraic proof of Christoph's theorem is provided. This theorem from algebraic-geometry is about the existence of a finite automaton for computing coefficient of a series for an algebraic function.



\section{Introduction}
The study of Taylor series is related to the problem of embedding algebraic varieties (see \cite{Ch, BC}), to algorithmic issues and issues of algebra bases (\cite{BRV}).

The formulation of Christoph's theorem includes the algebraic and logical sections of mathematics. It would be expected that it has algebro-geometric proof. 
\begin{theorem}
    Let $P(x) = a_0 + a_1x + a_2x^2 + \ldots$ be a power series with coefficients from the field of finite characteristic $p$. This series is an algebraic series if and only if $P(x)$  has a finite automaton,which given a number $n$ in a number $p$ system and returns $a_{n}$.
\end{theorem}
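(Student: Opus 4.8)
The plan is to prove both implications by analyzing the action of the Cartier-type \emph{section operators} $\Lambda_0,\dots,\Lambda_{p-1}$ on $P$, where $\Lambda_j\bigl(\sum_n a_n x^n\bigr)=\sum_n a_{pn+j}x^n$, together with the fundamental decomposition, valid in characteristic $p$,
\[
P(x)=\sum_{j=0}^{p-1} x^j\,(\Lambda_j P)(x)^{p}
\]
(after the standard Frobenius twist on coefficients when $\mathbb{F}_q\neq\mathbb{F}_p$). The bridge to automata is Eilenberg's characterization: the sequence $(a_n)$ is $p$-automatic if and only if its \emph{$p$-kernel} $\{\Lambda_{j_k}\cdots\Lambda_{j_1}P\}$, ranging over all finite words in $\{0,\dots,p-1\}$, is finite; the states of the minimal automaton are then exactly the kernel elements, with transition $\Lambda_j$ on reading the digit $j$. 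Thus both directions reduce to controlling the size of the orbit of $P$ under the monoid generated by the $\Lambda_j$.

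For the direction \textbf{automatic $\Rightarrow$ algebraic} (the easier one), suppose the $p$-kernel is finite and let $V$ be its finite-dimensional $\mathbb{F}_q$-span, with basis $g_1=P,g_2,\dots,g_m$. Finiteness means $\Lambda_j g_i\in V$ for all $i,j$, so substituting into the decomposition yields a system $g_i=\sum_k b_{ik}(x)\,g_k^{\,p}$ with $b_{ik}\in\mathbb{F}_q[x]$. In matrix form $\mathbf{g}=B\,\mathbf{g}^{(p)}$, exhibiting each $g_i$, and in particular $P$, inside a finite-dimensional $\mathbb{F}_q(x)$-vector space stable under Frobenius; eliminating $g_2,\dots,g_m$ produces a nontrivial Ore (additive) relation $\sum_{i=0}^{N}A_i(x)P^{p^i}=0$ with $A_i\in\mathbb{F}_q(x)$, so $P$ is algebraic over $\mathbb{F}_q(x)$.

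For the direction \textbf{algebraic $\Rightarrow$ automatic} (the algebro-geometric heart, where Riemann--Roch enters), realize $K=\mathbb{F}_q(x,P)$ as the function field of a smooth projective curve $C/\mathbb{F}_q$ of genus $g$. Since $\mathbb{F}_q$ is perfect and the transcendence degree is $1$, we have $[K:K^p]=p$ with $p$-basis $\{1,x,\dots,x^{p-1}\}$, so every $f\in K$ decomposes uniquely as $f=\sum_j x^j f_j^{\,p}$; the maps $f\mapsto f_j$ are precisely the section operators $\Lambda_j$ transported to $K$. The plan is to produce a single effective divisor $D$ with $P\in L(D)$ and $\Lambda_j\,L(D)\subseteq L(D)$ for all $j$. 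The controlling estimate is that passing from $f$ to $f_j$ roughly divides pole orders by $p$: one shows that the pole divisor of $\Lambda_j f$ has degree at most $\tfrac1p\deg(\text{pole divisor of }f)+c$, where $c$ depends only on $g$ and on $\operatorname{div}_\infty(x)$. Hence for $D$ a sufficiently large multiple of a fixed divisor dominating $\operatorname{div}_\infty(x)$ and the poles of $P$, the inequality $\deg D/p+c\le\deg D$ forces stability. By Riemann--Roch, $\dim_{\mathbb{F}_q}L(D)=\deg D-g+1$ is finite; since $\mathbb{F}_q$ is a finite field, $L(D)$ is a \emph{finite set} of cardinality $q^{\deg D-g+1}$. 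The whole $p$-kernel of $P$ lies in $L(D)$, hence is finite, and $(a_n)$ is $p$-automatic.

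The main obstacle is the uniform divisor estimate in the last step: making precise, at every place of $K$, how the section operators shift valuations, so that one $D$ absorbs all $\Lambda_j$ simultaneously. This demands careful bookkeeping at the places over $x=\infty$ and at the ramification of $K/\mathbb{F}_q(x)$, together with the Frobenius twist of residue-field coefficients when $\mathbb{F}_q\neq\mathbb{F}_p$; the latter is handled by first enlarging the ground field so that all relevant constants are fixed by Frobenius and then descending, using that algebraicity and automaticity are insensitive to finite constant extensions. Once the stable $D$ is fixed, identifying the automaton states with the finitely many elements of $L(D)$ and checking that reading base-$p$ digits implements the $\Lambda_j$ is routine.
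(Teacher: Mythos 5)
Your proposal is correct, and for the substantive direction (algebraic $\Rightarrow$ automatic) it is essentially the paper's argument in modern dress: your section operators $\Lambda_j$ are exactly the paper's ``weedings'' (a weeding of degree $k$ --- multiply by $x^k$, take the $(p-1)$-st derivative, extract a $p$-th root --- computes $\Lambda_{p-1-k}$ up to an inverse-Frobenius twist of the coefficients), and your Cartier-stable Riemann--Roch space $L(D)$ plays the role of the paper's Lemma 1, which is a weak Riemann--Roch finiteness statement proved by hand from the fact that a function without poles is constant; in both proofs the automaton's states are read off from the resulting finite structure, with digit-reading implemented by the operators. Where you genuinely differ --- to your advantage --- is in three places. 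First, you prove the converse implication (automatic $\Rightarrow$ algebraic) via Eilenberg's kernel criterion and the relation $\sum_{i} A_i(x)P^{p^i}=0$; the paper states an ``if and only if'' but its proof only constructs an automaton from algebraicity, so the converse is a genuine gap in the paper that your kernel/Ore argument fills. Second, you take the automaton's states to be the finitely many elements of $L(D)$, explicitly invoking finiteness of the ground field; the paper's states are coefficient tuples $(\alpha_1,\dots,\alpha_m)$, which form a finite set only under the same unstated hypothesis that the coefficient field is finite --- as literally stated (coefficients in ``the field of finite characteristic $p$'') the theorem is false, e.g.\ $\sum_n t^n x^n = 1/(1-tx)$ over $\mathbb{F}_p(t)$ is rational yet not automatic, since a finite automaton can output only finitely many values --- so your restriction to $\mathbb{F}_q$ is the correct fix. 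Third, you track the Frobenius semilinearity of the operators (enlarging the constant field and descending); the paper's Remark 1 asserts plain linearity of weeding, which is true only over $\mathbb{F}_p$, and your careful handling of the twist is what makes the basis-expansion update step of the automaton legitimate over a general finite field.
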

There will also be shown will be shown the connection of Christoph's theorem and Riemann-Roch theorem. Riemann-Roch theorem is one of the fundamental theorems in algebraic geometry.
\begin{theorem}
    If $A$ is arbitrary field divisor $K$ and $\lambda$ is arbitrary non-zero differential, then $$\displaystyle \ell(A) = n(A) - g + 1 + \displaystyle \ell(A^{-1}D_{\lambda})$$ where  ${\displaystyle \ell (D)}$ is the dimension (over C) of the vector space of meromorphic functions $h$ on a surface such that all coefficients of the divisor (h) + D are non-negative, and $n(D)$ is divisor dimension.
\end{theorem}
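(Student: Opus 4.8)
The plan is to establish the Riemann--Roch formula in two essentially independent stages: first a \emph{Riemann part}, which is a pure dimension count producing the term $n(A) - g + 1$ together with a nonnegative correction $i(A)$, and then a \emph{Roch part}, which identifies that correction with $\ell(A^{-1}D_\lambda)$ by means of a residue duality. Throughout I write divisors multiplicatively, so that $A^{-1}D_\lambda$ is the divisor $D_\lambda - A$ in additive notation, where $D_\lambda$ is the canonical divisor; $L(A)$ denotes the space of meromorphic $h$ on the surface with $(h) + A \geq 0$, of dimension $\ell(A)$, and $K$ denotes the field of meromorphic functions.

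First I would introduce the space $R$ of repartitions (adeles) of $K$: families $r = (r_P)_P$ indexed by points $P$ of the surface, with each $r_P \in K$ and $r_P$ regular at $P$ for all but finitely many $P$. For a divisor $A$ with local exponents $a_P$ put $R(A) = \{r \in R : v_P(r_P) \geq -a_P \text{ for all } P\}$, and embed $K$ diagonally in $R$. The first goal is the identity
$$\ell(A) - \dim_{\mathbb{C}}\bigl(R/(R(A) + K)\bigr) = n(A) - g + 1,$$
writing $i(A)$ for the finite dimension on the left. I would prove this by comparing two divisors $A \leq B$: the short exact sequence relating $R(B)/R(A)$, the quotient $L(B)/L(A)$ of function spaces, and $(R(B)+K)/(R(A)+K)$ yields $i(A) - i(B) = (n(B)-n(A)) - (\ell(B)-\ell(A))$, so that $\ell(A) - n(A) - i(A)$ is independent of $A$. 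Evaluating at the trivial divisor, where $\ell$ equals $1$ and $n$ equals $0$, fixes the constant once the genus is taken to be $g := i(1)$; this stage is elementary bookkeeping of finite-dimensional vector spaces, and the Roch part below will reconcile it with $g = \ell(D_\lambda)$.

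The heart of the matter, and the step I expect to be the main obstacle, is the Roch part: showing $i(A) = \ell(A^{-1}D_\lambda)$. I would introduce the residue pairing
$$\langle r, \omega \rangle = \sum_P \operatorname{Res}_P(r_P\,\omega)$$
between a repartition $r$ and a meromorphic differential $\omega$. The residue theorem, that the residues of a global meromorphic differential sum to zero, forces the diagonal copy of $K$ to pair trivially, and forces $R(A)$ to pair trivially with the space $\Omega(A) = \{\omega : (\omega) \geq A\}$ of differentials whose divisor dominates $A$; hence the pairing descends to $R/(R(A)+K)$ against $\Omega(A)$. The crux is to prove this induced pairing is \emph{perfect}. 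Nondegeneracy on the differential side is a local computation with principal parts, but surjectivity---that every linear functional on $R/(R(A)+K)$ is realized by a differential---is exactly Serre duality and is the one place where a genuine finiteness and existence input is required. Granting perfectness, $i(A) = \dim_{\mathbb{C}}\Omega(A)$.

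Finally I would translate the differential space back to a function space: dividing by the fixed nonzero differential $\lambda$, the assignment $\omega \mapsto \omega/\lambda$ is a $\mathbb{C}$-linear isomorphism $\Omega(A) \to L(A^{-1}D_\lambda)$, since $(\omega/\lambda) = (\omega) - D_\lambda$ turns the condition $(\omega) \geq A$ into $(\omega/\lambda) + (D_\lambda - A) \geq 0$. Therefore $\dim_{\mathbb{C}}\Omega(A) = \ell(A^{-1}D_\lambda)$, and combining this with the Riemann part gives $\ell(A) = n(A) - g + 1 + \ell(A^{-1}D_\lambda)$, as claimed.
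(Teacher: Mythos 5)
You should know at the outset that the paper contains no proof of this statement to compare against: Riemann--Roch is stated there purely as classical background, with Mumford's book as the reference, and the only finiteness result the paper actually proves is its Lemma 1 (multiples of a fixed divisor form a finite-dimensional space), obtained by an elementary count of pole conditions plus its Theorem 3 (a function without poles is constant). Your proposal is therefore necessarily a different route; concretely, it is the standard Weil--Serre adelic proof: repartitions, the index $i(A)=\dim_{\mathbb{C}}R/(R(A)+K)$, constancy of $\ell(A)-n(A)-i(A)$, the residue pairing, and division by the fixed differential $\lambda$ to identify $\Omega(A)$ with $L(A^{-1}D_\lambda)$. The architecture is correct, and the individual steps you do carry out are right: the bookkeeping identity $i(A)-i(B)=(n(B)-n(A))-(\ell(B)-\ell(A))$ for $A\le B$ (with the general case handled by a common dominating divisor), the vanishing of the pairing on the diagonal $K$ (residue theorem) and of $R(A)$ against $\Omega(A)$ (a local valuation estimate), and the isomorphism $\omega\mapsto\omega/\lambda$. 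There is even a pleasant point of contact with the paper: your normalization $\ell(1)=1$ at the trivial divisor is precisely the paper's Theorem 3, and the elementary finite-dimensionality of $L(A^{-1}D_\lambda)$ is precisely the paper's Lemma 1.

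As a proof, however, your text is incomplete at the one decisive point, and you say so yourself with ``granting perfectness.'' Perfectness of the induced pairing between $R/(R(A)+K)$ and $\Omega(A)$ \emph{is} Serre duality; it is not a technical lemma one may defer but the entire content of the theorem, and you give no indication of how to prove the hard half, namely that every linear functional on $R/(R(A)+K)$ is realized by a differential. Relatedly, in the Riemann part you call $i(A)$ ``the finite dimension'' without justification: finiteness of $\dim R/(R(A)+K)$ is equivalent to Riemann's inequality and is not elementary bookkeeping. It can be recovered from perfectness (since $\Omega(A)\cong L(A^{-1}D_\lambda)$ is finite-dimensional by the elementary argument), but then your two stages are not independent as announced --- the Riemann part's finiteness rests on the Roch part. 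In sum, what you have is a correct and well-organized reduction of Riemann--Roch to Serre duality, not yet a proof of it; to complete it you must supply the existence argument for differentials realizing functionals, or equivalently prove $i(A)=\dim\Omega(A)$ directly.
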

For the proof of Christoph’s theorem will be use similar fact, but for fields of finite dimension. Also we will look only on algebraic functions - that is, algebraic extension of the set of rational functions.
\begin{lemma}\label{Lemma1}
    Let $A$ denote a set of algebraic functions with fixed set of poles and suppose each of the poles have degree less than $k$. In this case $A$ have a finite basis.
\end{lemma}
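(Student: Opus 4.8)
The plan is to recognize $A$ as a Riemann--Roch space and to bound its dimension by a local-to-global argument on the polar parts at the allowed poles, so that the full strength of Theorem 2 is not needed --- only the weaker finiteness it entails. Fix the finite set of poles $p_1,\dots,p_m$ and let $D = (k-1)(p_1+\cdots+p_m)$ be the divisor recording that each $p_i$ is permitted to be a pole of order at most $k-1$ and that no other poles occur. Then $A = \{f : (f) + D \ge 0\}$ is a vector space over the field of constants $\C$, and the lemma reduces to proving $\dim_{\C} A < \infty$, since every finite-dimensional space admits a finite basis.

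First I would introduce the \emph{principal-part map}. At each place $p_i$ choose a local uniformizer $t_i$; every $f \in A$ then has a Laurent expansion at $p_i$ whose negative part $\sum_{j=1}^{k-1} c_{i,j}\, t_i^{-j}$ has at most $k-1$ terms. Sending $f$ to the tuple of these polar parts defines a $\C$\dash linear map $\Phi\colon A \to \bigoplus_{i=1}^m V_i$, where $V_i$ is the space of admissible principal parts at $p_i$. Each $V_i$ is finite-dimensional, so the codomain has dimension at most $m(k-1)$ (scaled by the degrees of the residue fields when $\C$ is not algebraically closed), and in particular $\Image \Phi$ is finite-dimensional.

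Next I would identify the kernel. A function $f \in \ker\Phi$ has trivial polar part at every $p_i$ and, belonging to $A$, has no poles anywhere else; hence $f$ is regular at every place of the function field. On the complete nonsingular curve attached to the field of algebraic functions, the only everywhere-regular functions are the constants, so $\ker\Phi = \C$ is one-dimensional. Combining the two observations yields $\dim_{\C} A \le 1 + m(k-1) < \infty$, and any basis of this finite-dimensional space is the desired finite basis of $A$.

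The main obstacle is the kernel step: one must argue rigorously that an algebraic function with no poles at all must be constant. This is precisely where completeness enters --- I would pass to the complete model of the function field so that the phrase \emph{fixed set of poles, each of degree less than $k$} genuinely accounts for all places, including those ``at infinity,'' and then invoke properness to conclude that a global regular function lies in $\C$. Everything else is routine bookkeeping with the local Laurent expansions.
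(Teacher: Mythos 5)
Your proof is correct and is essentially the paper's own argument: your principal-part map $\Phi$ encodes exactly the finitely many linear conditions on polar parts that the paper imposes on a candidate linear combination, and your identification of $\ker\Phi$ with the constants is precisely the paper's appeal to Theorem~\ref{theorem3} (a function with no poles is constant). The only difference is presentational: you conclude $\dim A \le 1 + m(k-1)$ directly by rank--nullity, while the paper runs the same dimension count as a proof by contradiction, obtaining the (sharper) bound $n+1$ with $n = \sum d_i$.
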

Book  D. Mumford~--\cite{Mam} was used for studying algebraic geometry. Also a lot of facts about the valuation theory related to Lemma(\ref{Lemma1}) are detailed in the book on algebra written by B. L. Van Der Waerden~--\cite{Var}.

\section{Proof of central lemma}

To begin with, we prove the Lemma(\ref{Lemma1}). For this purpose, let us introduce few definitions.

We call a function $\omega$ \emph{rationing} of the field $F$ called function from $F$ to $\mathbb R ^{*}$ if it satisfies the following: 

1) $\omega(a)=\infty$ $\Leftrightarrow$ $a=0$

2) $\omega(ab)=\omega(a) + \omega(b)$

3) $\omega(a + b) \geqslant min(\omega(a), \omega(b)$

Rationings $\omega$ and $\psi$ are equivalent, if when $\omega(a) < 1$, we also have $\psi(a) < 1$.

Place of field $F$ is an equivalence class of valuations of this field. Within the framework of the problem posed, we will only talk about valuations of the field of algebraic functions. That is, about extensions of valuations of the field of rational functions to its extension.

The divisor is a formal finite product of places $p_{i}$ in some integer degrees $d_{i}$. Divisor of function $f$ is $(f) =\prod_{i=1}^{n} p_{i}^{d_{i}}$, with $d_{i}$~-- order $f$ of the place $p_{i}$.

Also, for the proof, we need the theorem on functions without places with non-zero order:

\begin{theorem}\label{theorem3}
    Function $f$ having no poles is a constant.
\end{theorem}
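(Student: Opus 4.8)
The plan is to argue by contradiction: assuming $f$ is nonconstant and has no poles, I will manufacture a place of $F$ at which $f$ has strictly negative order, contradicting the hypothesis. Let $k$ denote the field of constants of $F$, that is, the nonzero elements $c$ with $\omega(c)=0$ at every place. A nonconstant $f$ is then transcendental over $k$, so $k(f)\subseteq F$ is a purely transcendental subfield, and since $F$ is an algebraic function field of one variable, the extension $F/k(f)$ is finite.

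First I would locate a pole of $f$ already inside the smaller field $k(f)$. On $k(f)$ the ``order at infinity'' $v_\infty(p/q)=\deg q-\deg p$ is a valuation with $v_\infty(f)=-1<0$, so $f$ genuinely has a pole as an element of $k(f)$. The crux is then to lift this pole to $F$: by the extension theorem for valuations (see the valuation theory in \cite{Var}), the place determined by $v_\infty$ on $k(f)$ admits at least one extension to a place $\omega$ of $F$. For any such extension one has $\omega(x)=e\cdot v_\infty(x)$ for $x\in k(f)$, with ramification index $e\geqslant 1$, whence $\omega(f)=e\,v_\infty(f)=-e<0$. Thus $f$ has a pole at the place $\omega$ of $F$, contradicting the assumption. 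This forces $f$ to be constant.

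The main obstacle is exactly this lifting step: one must guarantee that the infinite place of $k(f)$ really does extend to $F$, and that the extension keeps the order of $f$ negative. Existence of the extension is the content of the valuation-extension (going-up) theorem, and positivity of the ramification index $e$ is what ensures a pole stays a pole rather than being smoothed out. An alternative route that avoids explicit extensions is to invoke the degree formula $\deg((f))=\sum_{p}\omega_p(f)\deg(p)=0$: the absence of poles makes every $\omega_p(f)\geqslant 0$ while the total degree is $0$, so every $\omega_p(f)=0$ and $f$ has no zeros either; applying the same reasoning to $f-c$ for the residue value $c=f(P)$ at some place $P$ then produces a zero with no compensating pole unless $f-c=0$. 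Either way the decisive input is a global constraint packaging the completeness of the underlying curve, and that is where the real work lies.

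\qed
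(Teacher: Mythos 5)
Your proof is correct, but there is nothing in the paper to compare it against: the paper does not prove this theorem at all, it only cites chapter 19 of Mumford's book \cite{Mam}, a complex-geometric source where the statement amounts to the fact that a function holomorphic on a complete curve is constant. Your argument is the standard valuation-theoretic one --- essentially the treatment in Van der Waerden \cite{Var}, which the paper also cites --- and it has the advantage of matching the paper's actual setting, where places are equivalence classes of valuations of a function field in positive characteristic rather than points of a complex curve. The core steps (nonconstant $\Rightarrow$ $f$ transcendental over the constant field $k$ and $F/k(f)$ finite; the infinite place of $k(f)$ extends to some place $\omega$ of $F$ by the valuation-extension theorem; $\omega(f)=-e<0$ exhibits a pole) are all sound. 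Two refinements would make it airtight. First, ``nonconstant implies transcendental over $k$'' needs the observation that your $k$ (the elements of order zero at every place) is algebraically closed in $F$: if $f$ satisfies $f^n+c_{n-1}f^{n-1}+\dots+c_0=0$ with $c_i\in k$, the ultrametric inequality forces $\omega(f)\geqslant 0$ at every place, and the same applied to $1/f$ forces $\omega(f)\leqslant 0$, so $f\in k$; this is one line but it is what makes the contrapositive legitimate. Second, your alternative route through $\deg((f))=0$ is both heavier and slightly gappy here: the degree formula for principal divisors is itself normally derived from the pole-lifting argument you already gave, and the step ``subtract the residue value $c=f(P)$'' silently assumes the residue field at $P$ equals $k$, which can fail over a non-algebraically-closed constant field (the fix is to take the minimal polynomial $m$ of $c$ over $k$ and note that $m(f)$ has a zero at $P$ but no poles, hence $m(f)=0$ and $f$ is algebraic over $k$, hence constant). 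The first argument alone is complete and is arguably the proof this paper should have included, since it stays entirely within the valuation-theoretic language the paper sets up.
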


The proof of this theorem is given in the book by D. Mumford\cite{Mam}, in chapter 19. 

The function $f$ is a multiple of the divisor $A$, if all degrees of the divisor $(f) A^{-1}$ are greater than 0. Thus, it suffices to prove, that functions $f$, that are multiples of the divisor $A^{-1}$, have a finite basis, where $A = \prod _{i=1}^{n} p_{i}^{d_i}$ and $d_i < k$ for all $t$.

\begin{proof}[Proof of Lemma 1]
    Suppose there are $f_1, \ldots f_{n+2}$ linearly independent multiples of $A$, where $ n = \sum d_i $. If we decompose all of $f_i$ to the series, we can choose $a_1, \ldots a_{n+2}$ such that function $f = a_1f_1 + \ldots + a_{n + 2}f_{n + 2}$ has only positive non-zero coefficients, because there are $n$ linear conditions for $a_{i}$, and $n + 2$  unknown variables. Moreover, the functions $f$ with this condition form  modulus of dimension not less than 2. But functions without negative degrees in series expansion haven't got poles, hence, from the Theorem(\ref{theorem3}) we have that all $f$ are constants. It contradicts with the fact that the constants are a module of dimension 1.  
\end{proof} 

Thus, algebraic functions with a finite number of bounded poles have a finite basis.

\section{Transformation of series} 

Let us look at the series $P(x) = a_0 + a_1 x + a_2 x \ldots$. We work in characteristic $p$, so if we take $(p - 1)$-th derivative than the new series will look like $P(x)^{(p - 1)} = a_{p-1} + a_{2p-1}x^{p} + a_{3p-1}x^{2p} + \ldots$, because $1\cdot 2 \cdot\ldots\cdot(p-1) = -1$. Note, that in characteristic $p$  $a^p + b^p = (a + b)^p$, so from the expression $P(x)^{(p - 1)}$ one can take the root of the $p$~-th degree term by term. 

\begin{equation}
\sqrt[{p}]{-P(x)^{(p - 1)}} = \sqrt[{p}]{a_{p-1}} + \sqrt[{p}]{a_{2p-1}}x + \sqrt[{p}]{a_{3p-1}}x^2 + \ldots = a_{p-1} + a_{2p-1}x + a_{3p-1}x^2 + \ldots
\end{equation}

The same transformation can be used with first multiplying $P(x)$ by $x^{k}$, where $k<p$. This transformation will be called a \emph{weeding} of degree $k$. This transformation can also be used for algebraic functions. Weeding of degree $k$ for $f$ will be denoted by $f^{[k]}$.

\begin{rem} \label{Remark1}
       
    The operation described above is linear, that is, for two functions $f$ and $g$ it is true that $\alpha f^{[k]}+g{[k]} = (\alpha f+g)^{[k]}$ for all algebraic functions $f$ and $g$ and for every $\alpha$ from the field.
        
\end{rem}
$\\$
Let's also see how the divisor of the function changes when a weeding is performed.
\begin{rem}\label{Remark2}
    If we apply all possible compositions of weedings of various degrees, then we get a set of algebraic functions with a finite number of poles of a bounded degree.
\end{rem}

\begin{proof}[Proof of Remark 2]

    Initially, an algebraic function has a finite number of poles. If take $p-1$ derivatives,  power of the finite poles will increase by at most $p$ times. Multiplication by $x^k$ will only increases the degree of a pole at infinity by at most $k<p$. And when taking the root of the $p$-th degree, the number of poles will decrease by a factor of $p$, thus, after weeding, the degree of all poles is bounded.
\end{proof} 

\section{Proof of Chistoph's theorem} 

\begin{theorem} \label{3}

	Let $P(x) = a_0 + a_1x + a_2x^2 + \ldots$ be a power series with coefficients from the field of finite characteristic $p$. This series is an algebraic series if and only if $P(x)$  has a finite automaton,which given a number $n$ in a number $p$ system and returns $a_{n}$.

\end{theorem}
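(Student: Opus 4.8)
The plan is to prove the two directions of the equivalence separately, and to build the hard direction (algebraic $\Rightarrow$ finite automaton) on top of the machinery already assembled: the weeding operation $f\mapsto f^{[k]}$, its linearity (Remark~\ref{Remark1}), the fact that iterated weedings keep poles bounded (Remark~\ref{Remark2}), and the finite-dimensionality statement (Lemma~\ref{Lemma1}). The guiding idea is that the states of the automaton will be (represented by) algebraic functions reachable from $P(x)$ under weedings, and the finiteness of the state set will come directly from Lemma~\ref{Lemma1}.

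First I would make precise the link between weeding and base\dash$p$ digit extraction. For the easy direction, suppose an automaton exists. I would argue that a finite automaton reading the digits of $n$ in base $p$ and outputting $a_n$ means the sequence $(a_n)$ is $p$\dash automatic; by the classical Christol correspondence one reassembles $P(x)$ as a finite $\mathbb{F}_p$\dash linear combination of the series indexed by the automaton's states, and each such series is visibly fixed (up to the $p$\dash th power Frobenius substitution $x\mapsto x^p$) by the transition structure, which forces $P$ to satisfy a polynomial equation over the rational function field. Concretely, if there are $m$ states then the $m$ series attached to the states span a finite\dash dimensional space stable under weeding, and a nontrivial linear dependence among $P, P^{\text{(weedings)}}, \ldots$ together with the Frobenius relation $x\mapsto x^p$ yields an algebraic equation for $P$.

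The main work is the converse. Starting from an algebraic $P$, I would observe that by Remark~\ref{Remark2} the set of all functions obtained from $P$ by arbitrary finite compositions of weedings $f^{[k]}$, $0\le k<p$, consists of algebraic functions whose poles are bounded in number and in degree. By Lemma~\ref{Lemma1} the $\mathbb{F}_p$\dash span of this set therefore lies in a finite\dash dimensional space $V$, and since each weeding $f\mapsto f^{[k]}$ is linear (Remark~\ref{Remark1}) and maps $V$ into $V$, the weedings act as $p$ linear operators on the finite\dash dimensional space $V$. The automaton is then read off directly: its states are a finite spanning set (equivalently, the finitely many distinct functions) of the weeding\dash closure of $P$, the transition on reading digit $k$ is the operator $f\mapsto f^{[k]}$, and the output attached to a state is its constant term. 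Because weeding of degree $k$ extracts exactly the subsequence of coefficients congruent to $k$ modulo $p$ (the identity displayed as Equation~(1), combined with multiplication by $x^k$), following the base\dash$p$ digits of $n$ through the transitions and reading off the constant term at the end returns precisely $a_n$.

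The step I expect to be the genuine obstacle is controlling the finiteness uniformly: I must check that the single application of Lemma~\ref{Lemma1} really does bound the \emph{entire} infinite weeding\dash orbit and not merely each function individually. This requires verifying that all iterated weedings share one fixed finite pole set of bounded degree, so that they all live in the \emph{same} space $A$ of Lemma~\ref{Lemma1}; Remark~\ref{Remark2} asserts the bound but the uniformity (one common divisor $A$ for the whole orbit, independent of how many weedings are composed) is the delicate point, since naively each derivative could enlarge the poles without limit. I would resolve this by a fixed\dash point / stabilization argument: the pole degrees cannot grow indefinitely because each weeding multiplies finite\dash pole orders by at most $p$ and then divides by $p$ upon taking the $p$\dash th root, so the orders stabilize into a bounded region, giving a single $A$ that works for all iterates and hence a genuinely finite state set.
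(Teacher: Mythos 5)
Your main direction (algebraic $\Rightarrow$ automaton) is essentially the paper's own proof: the orbit of $P$ under iterated weedings, Remark~\ref{Remark2} to bound the poles, Lemma~\ref{Lemma1} to extract a finite basis $z_1,\dots,z_m$, Remark~\ref{Remark1} to make each weeding act linearly, states recorded as coordinate vectors in that basis, transitions given by the matrices of the weeding operators, and the constant term as output. Two differences are worth recording, both to your credit. First, the paper proves \emph{only} this implication, although the theorem is stated as an equivalence; your sketch of the converse (the finitely many state series span a space stable under the digit operators, and the relation with $x\mapsto x^p$ forces a polynomial equation for $P$ over the rational functions) supplies something the paper genuinely omits --- but you should write that argument out directly rather than appeal to ``the classical Christol correspondence'', since that correspondence is exactly the theorem being proved. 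Second, the uniformity worry you flag is precisely the soft spot of the paper: the proof of Remark~\ref{Remark2} there is an estimate for a single weeding, stated loosely, and your stabilization argument (a pole of order $d$ becomes one of order at most about $d/p+2$, hence orders remain below a fixed bound under arbitrarily many iterations) is the needed repair; to make it complete you should also check that pole \emph{locations} stay within one fixed finite set, since the $p$\dash th root step moves poles along inverse Frobenius. One small slip: with the paper's definition, the weeding $f^{[k]}$ retains the coefficients $a_n$ with $n\equiv p-1-k \pmod p$, so reading the digit $k$ must apply $f\mapsto f^{[p-k-1]}$ (exactly as in the paper's update rule using $z_i^{[p-k-1]}$), not $f\mapsto f^{[k]}$; this is a harmless relabeling, but as written your automaton extracts the wrong subsequence.
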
 

\begin{proof} 
    
     Consider a number $n$ whose $p$-ary system is $k_0 + p  k_1 \ldots + p^{i}{k_{i}}$. We do a weeding of degree $p - k_{0} - 1$ of $P(x)$. Note that since it follows from the $p$-ary system of the number $n$ that ${n-k_{i}}$ is divisible by $p$, under the chosen weeding the coefficient of $x^{n}$ will turn into $p$-th power of the coefficient of $x^{\frac{n + p - k_{0} - 1 - (p - 1)}{p}}$.
    
    Denote $\frac{n - k_{0}}{p}$ as $n_1$, so the $p$-exact representation of $n_1$ is $k_1 + p k_2 \ldots + p^{i - 1}{k_{i}}$. 
    
    Successively applying weedings of degree $p - k_{0} - 1$, $p - k_{1} - 1$, $\ldots$, $p - k_{i} - 1$, we obtain a new series, the free coefficient of which will be equal to $a_n$.
    
    Let's use Remark($\ref{Remark2}$). If we apply all possible compositions of weedings of various degrees, then we get a set of algebraic functions with bounded degrees of poles, in other words, all the resulting functions will be multiples for some divisor. By virtue of Lemma($\ref{Lemma1}$), we can state that then all possible functions obtained by applying weedings are finitely based. Let choose a basis of this space $z_1$, $z_2$, $\ldots$, $z_m$, these functions correspond to some series $z_i = b_{i_0}+b_{i_1}x+b_{i_2}x^2+\ldots $.
    
    Finally we need to build a finite state machine:
    
    Consider a finite automaton for the series $P(x)$ that takes as input a sequence of numbers from 0 to $p-1$~-- the coefficients of the $p$-ary representation of some number $n$ and returns an element of the field $F$ equal to $a_n$ . After each step, the automaton will store a sequence of $m$ numbers $\alpha_1, \ldots, \alpha_m$ initially equal to the expansion coefficients of $P(x)$ in terms of the basis $z_1$, $z_2$, $\ldots$, $z_m$.
    
    When a new element $k$ is read, the stored list will be updated by the sum 
    \begin{equation}
    \alpha_1(a_{1_1}, a_{1_2}, \ldots,a_{1_m})+\alpha_2(a_{2_1}, a_{2_2}, \ldots,a_{2_m})+\ldots+\alpha_m(a_{m_1}, a_{m_2}, \ldots,a_{k_m})
    \end{equation}
    where $(a_{i_1}, a_{i_2}, \ldots,a_{i_m})$ is an expansion of $z_i^{[p - k - 1]}$ in the chosen basis. This will be equivalent to weeding of the degree $k$ of the entire series, due to Remark($\ref{Remark1}$).
    
    When the input runs out, this automaton will automatically return the value $a_n = \alpha_1b_{1_0} + \ldots + \alpha_m b_{m_0}$.
    
    Thus, we have built a finite automaton which, given the $p$-ary representation of the number $n$, returns the coefficient $P(x)$ for $x^n$.
   	
\end{proof}

\end{document}